\newtheorem{Theorem}{Theorem}[section]
\def\ee{\end{eqnarray*}}
\def\be{\begin{eqnarray*}}
\def\bee{\end{eqnarray}}
\def\bbe{\begin{eqnarray}}
\def\ea{\end{align*}}
\def\ba{\begin{align*}}
\let\today\relax
\def\ps@pprintTitle{%
    \let\@oddhead\@empty
    \let\@evenhead\@empty
    \def\@oddfoot{\footnotesize\itshape
      \hfill\today}
    \let\@evenfoot\@oddfoot
    }
\newcommand{\bal}{\@ifstar{\@bals}{\@bal}}
\def\@bals#1\eal{\begin{align*}#1\end{align*}}
\def\@bal#1\eal{\begin{align}#1\end{align}}
\def\u{\bm u}
\def\Z{\mathbb Z}
    \def\T{ \mathbb{T}^2}
    \def\p{\partial}
\begin{document}
\journal{}
\title{ Quasi-stationary solutions  of the  surface quasi-geostrophic equation  }

\author{Zhi-Min Chen}

 \ead{zmchen@szu.edu.cn}
\address{School  of Mathematics and Statistics, Shenzhen University, Shenzhen 518060,  China}%


\begin{abstract}
In the present study, we find that  the surface quasi-geostrophic equation admits exact solutions, which evolve with time in quasi-stationary states.
The solutions presented are available    for any dissipation effect  $\kappa (-\Delta)^\alpha$ ($\kappa >0$,  $0\le \alpha <1$), involved in  the equation.
When   the equation is supercritical ($0\le \alpha <\frac12$), the problem on the existence of large global  regular solutions remains open.
This study, however, provides  explicit sample solutions for the understanding of the uncertain problem.

\end{abstract}
\begin{keyword}
Surface quasi-geostrophic equation, dissipative  quasi-geostrophic equation, exact solutions, quasi-stationary states, special solutions
\end{keyword}

\maketitle

\section{Introduction}
Consider the surface quasi-geostrophic equation
\bbe
0=\p_t\theta +\u\cdot \nabla \theta +\kappa (-\Delta )^\alpha \theta
\label{new1}
\bee
for $0\leq \alpha <1$ and  the dissipative parameter $\kappa >0$. Here $\theta$ is a scalar unknown representing potential temperature and  $\u$ is the velocity expressed as
\be \u  = (\p_y, -\p_x) (-\Delta)^{-\frac12}\theta.
\ee

Equation (\ref{new1}) presents  an interesting simple model that exhibits a number of nonlinear and
dissipative characters of the 3D Navier-Stokes equations, and hence has been studied extensively. For the subcritical case $\frac12<\alpha <1$, the existence of global  solutions was obtained by Constantin and Wu \cite{Con}.  When   $\alpha =\frac12$, equation (\ref{new1}) is critical, as it is comparable to the 3D Navier-Stokes equations with respect to a priori estimates in function spaces. However, the maximal principle, which  is not applicable to  the 3D Navier-Stokes equations, is available to (\ref{new1}). Thus global  regular solutions remain existing (see Kiselev {\it et al.} \cite{K}).   For the supercritical case $0\le \alpha <\frac12$, the existence of local regular solutions and small global  regular solutions have been obtained by Chae and Lee \cite{Lee}, Chen {\it et al.} \cite{Chene}, Wu \cite{Wu} and C\'ordoda and C\'orododa \cite{Co} in a variety of function spaces. However, it is unknown for the existence of global regular solutions in the supercritical case. If the motion (\ref{new1}) is additionally   driven by an external force, the existence of bifurcating stationary flows  was studied by Chen and Price \cite{Chen08} and the author \cite{Chen16}.

For the understanding of the dynamical behaviors of  the solution to (\ref{new1}), we consider  the equation to be   $2\pi$-spatially periodic  in the domain $\T=[0, 2\pi)\times [0,2\pi)$, and simply present some exact solutions, which evolve in  quasi-stationary  states.

 \section{Exact solutions}

 The exact solution result is stated in the following.
 \begin{Theorem}\label{th}
 Let $0\le \alpha <1$ and $\kappa>0$.
For any  real constants $c_1,..., c_8$, and integers $n,\,m$ with $nm\ne0 $ and $k$ so that
\bbe n^2+m^2=k^2 \,\,\mbox{ whenever }\,\, (|c_1|+...+|c_4|)(|c_1|+...+|c_4|)\ne 0,
\bee
  then (\ref{new1}) in $\T$  admits the following exact solution
\bal
\theta =& e^{-\kappa (n^2+m^2)^\alpha t} \Big(c_1\sin  n x \sin my+c_2\cos  n x \sin my+c_3\sin  n x \cos my\nonumber
\\ \nonumber
& \hspace{22mm}+c_4\cos  n x \cos my\Big)\\
&+ e^{-\kappa |k|^{2\alpha} t}\Big( c_5\sin kx+c_6\sin ky+c_7 \cos kx+c_8 \cos ky\Big). \label{s1}
\eal
Moreover,  for any integers $n$ and $m$  with $|n|+|m|\ne0 $ and for any constants $a_k$ and $b_k$ so that $\sum _{k \in \Z} |k|(a_k^2 + b_k^2)<\infty$, then the function
\bal \theta &= \sum_{k\in \Z}  e^{-\kappa (n^2k^2+m^2k^2)^\alpha t}\Big(a_k\cos ( kn x+kmy) + b_k\sin ( kn x+kmy)\Big) \label{s2}
\eal
solves (\ref{new1}) in $\T$.

\end{Theorem}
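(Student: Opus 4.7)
The plan is to observe that, after expanding every product of sines and cosines by product-to-sum formulas, each summand appearing in (\ref{s1}) and (\ref{s2}) is a pure plane wave $\sin(\q\cdot\r)$ or $\cos(\q\cdot\r)$ with some integer wavevector $\q\in\Z^2$. Any such plane wave is an eigenfunction of $(-\Delta)^\alpha$ with eigenvalue $|\q|^{2\alpha}$, so the linear operator $\p_t+\kappa(-\Delta)^\alpha$ acts diagonally in this basis and reproduces exactly the exponential factors prescribed in (\ref{s1})--(\ref{s2}). It therefore suffices to check that the advective term $\u\cdot\nabla\theta$ vanishes identically for the proposed $\theta$.

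A direct computation from $\u=(\p_y,-\p_x)(-\Delta)^{-1/2}\theta$ shows that the velocity generated by a single plane-wave mode of wavevector $\q$ is parallel to $\q^\perp=(q_2,-q_1)$ (times a trigonometric function), while $\nabla$ of the same mode is parallel to $\q$; in particular the self-interaction $\u_\q\cdot\nabla\theta_\q$ is zero. For two distinct wavevectors $\q,\q'$, pairing the two cross contributions gives the identity
\begin{align*}
\u_\q\cdot\nabla\theta_{\q'}+\u_{\q'}\cdot\nabla\theta_\q \;\propto\; (q_1q_2'-q_2q_1')\left(\frac{1}{|\q|}-\frac{1}{|\q'|}\right),
\end{align*}
which vanishes whenever $|\q|=|\q'|$ (same Euclidean length) or whenever $\q\parallel\q'$ (collinear).

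For solution (\ref{s1}), product-to-sum produces plane waves with wavevectors $(\pm n,\pm m)$, all of length $\sqrt{n^2+m^2}$, from the first block, and with wavevectors $(\pm k,0)$ and $(0,\pm k)$, all of length $|k|$, from the second block. Under the stated hypothesis $n^2+m^2=k^2$, imposed precisely when both blocks are present, every wavevector has the common magnitude $|k|$, so the factor $1/|\q|-1/|\q'|$ kills every cross term and $e^{-\kappa|k|^{2\alpha}t}$ is the unique dissipative rate. For solution (\ref{s2}), every wavevector has the form $k(n,m)$, so any two of them are collinear, the determinant $q_1q_2'-q_2q_1'$ vanishes, and again all cross contributions are zero. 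The summability $\sum_k |k|(a_k^2+b_k^2)<\infty$ places $\theta(\cdot,t)$ in $H^{1/2}(\T)$, which is amply enough to make $\u$ and the bilinear form $\u\cdot\nabla\theta$ well defined and to justify termwise manipulation.

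The only substantive ingredient is the algebraic cancellation in the identity above, and I do not expect a real obstacle there; everything else is bookkeeping of Fourier modes. The mildest care is needed in (\ref{s2}) when passing the nonlocal operators $(-\Delta)^{-1/2}$ and $(-\Delta)^\alpha$ through the infinite sum, which the $H^{1/2}$ bound comfortably covers.
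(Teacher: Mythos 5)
Your proposal is correct and follows essentially the same route as the paper: show the advective term vanishes because every Fourier mode present has either the same wavenumber magnitude (case (\ref{s1})) or a collinear wavevector (case (\ref{s2})), then observe that $\theta$ solves the linear equation $\p_t\theta+\kappa(-\Delta)^\alpha\theta=0$. The only cosmetic difference is that for (\ref{s1}) the paper shortcuts your mode-by-mode cross-term identity by noting that the equal-magnitude condition makes $\theta$ a single eigenfunction of $(-\Delta)^{-1/2}$, so that $\u\cdot\nabla\theta=\lambda\,\nabla^{\perp}\theta\cdot\nabla\theta=0$ without any expansion.
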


\begin{proof}

The proof of (\ref{th}) is straightforward.

For the validity   with respect to $\theta$ given by (\ref{s1}), we see that
\be (-\Delta)^{-\frac12 } \theta = \lambda \theta  \,\,\mbox{ for either }\,\, \lambda = \frac1{\sqrt{n^2+m^2}} \,\,\mbox{ or } \,\, \lambda = \frac1{|k|}.
\ee
This implies
\be \u \cdot \nabla \theta 
&=&\lambda (\p_y \theta, -\p_x \theta)\cdot \nabla \theta=0.
\ee
Rewrite (\ref{s1}) in the following form
$\theta(t) = e^{-(-\Delta)^{\alpha}t} \theta (0),$
which solves the linear equation
\bbe \label{L}\p_t\theta + \kappa (-\Delta)^\alpha \theta =0
\bee
and hence solves (\ref{new1}).

For the validity with  respect to $\theta$ given by (\ref{s2}), we see that
\bal* \u\cdot \nabla \theta =&\Big(\p_y (-\Delta)^{-\frac12}\theta, -\p_x (-\Delta)^{-\frac12}\theta\Big)\cdot \nabla \theta
\\
=&\sum_{k,k'\in \Z}e^{-\kappa (|k'|^{2\alpha} +|k|^{2\alpha})(n^2+m^2)^\alpha t}\frac{km (-a_k\varphi_k+b_k \phi_k) n  k'(-a_{k'} \varphi_{k'} +b_{k'}\phi_{k'})}{\sqrt{n^2k^2+m^2k^2}}
\\
-&\sum_{k,k'\in \Z} e^{-\kappa (|k'|^{2\alpha} +|k|^{2\alpha})(n^2+m^2)^\alpha t}\frac{kn (-a_k\varphi_k+b_k \phi_k) m k'(-a_{k'} \varphi_{k'} +b_{k'}\phi_{k'})}{\sqrt{n^2k^2+m^2k^2}}
\\
=&0
\eal
for
\be \phi_k=\cos ( kn x+kmy) \,\,\mbox{ and }\,\, \varphi_k=\sin ( kn x+kmy).
\ee
Note that $\theta$ solves the linear equation (\ref{L}) and hence solves (\ref{new1}).

The proof is complete.
\end{proof}

When $\kappa$ is  as small as $0.001$ and $\alpha$ is close to the critical stage $\frac12$, satisfactory numerical solutions of (\ref{new1}) via  a spectral scheme were presented  by Constantin {\it et al.} \cite{Con2}
by choosing respectively  the following initial data
\bal*
&\sin x\sin y +\cos y,
\\
&-\cos 2x \cos y + \sin x \sin y,
\\
& \cos 2x\cos y+\sin x \sin y + \cos 2x\sin 3y.
\eal
As shown in \cite{Con2}, the flow patterns of the  solutions initially from these  data vary with the  time $t$.

In the present study, however,  the flow patterns of the exact solutions behave in a quasi-stationary manner.
For example,  we choose the solutions
\bal
\theta_1&= e^{-5^\alpha \kappa t} ( \sin 2x\sin y +\frac12\cos 2x\cos y),\label{s3}
\\
\theta_2&=e^{-25^\alpha \kappa t} ( \sin 4x\sin 3y +\frac12\cos 4x\cos 3y +\sin 5y +\frac12 \sin 5x)\label{s4}
\eal
from (\ref{s1}), and the solution
\bal
\theta_3 &= e^{-2^\alpha \kappa t}  \sin (x+y) +  e^{-8^\alpha \kappa t}  \sin (2x+2y)\label{s5}
\eal
from (\ref{s2}). They are displayed in Figure \ref{f1} for $\kappa=\alpha =0.001$.

\begin{figure}[h]
 \centering
 \includegraphics[height=.35\textwidth, width=.49\textwidth]{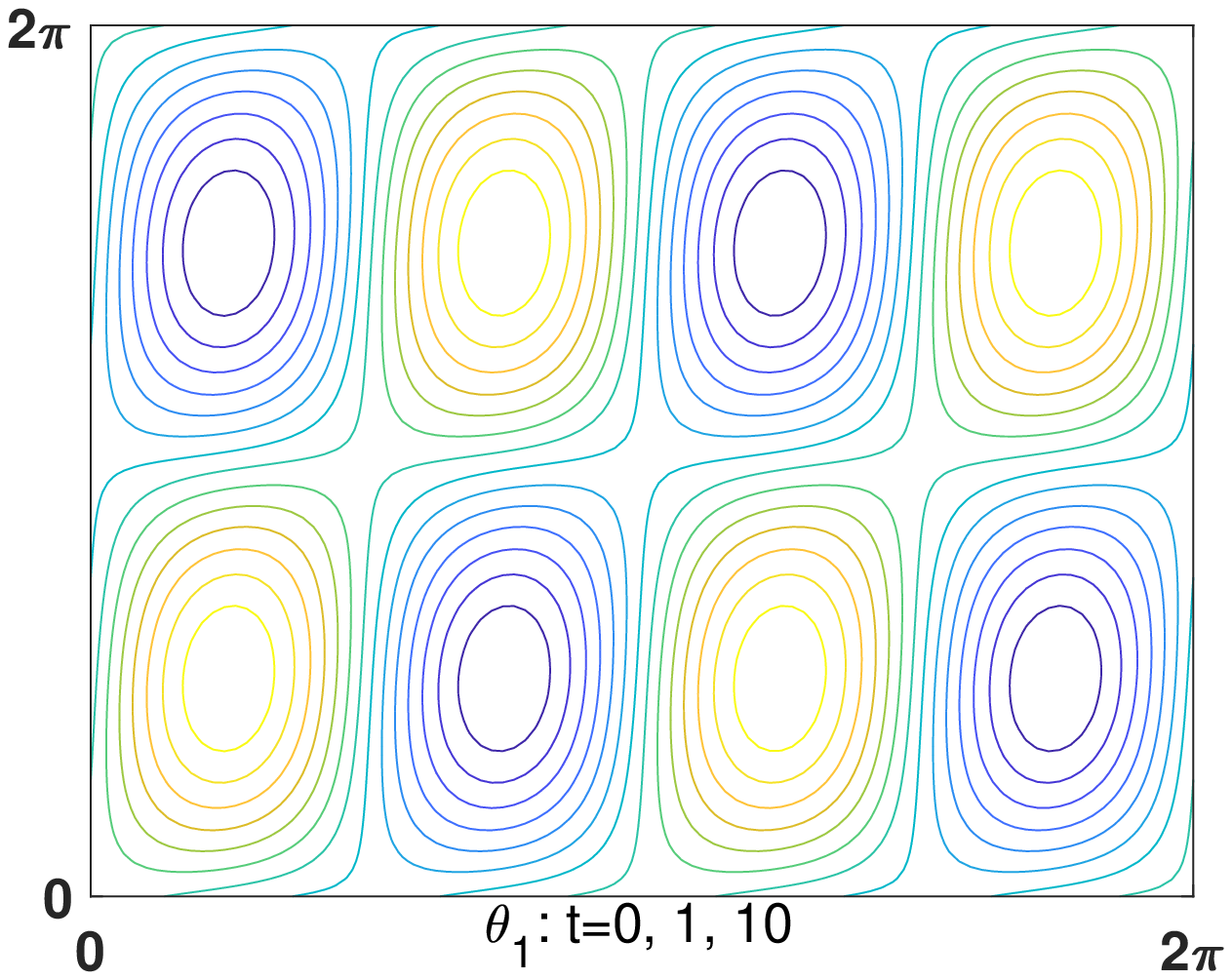}
 \includegraphics[height=.35\textwidth, width=.49\textwidth]{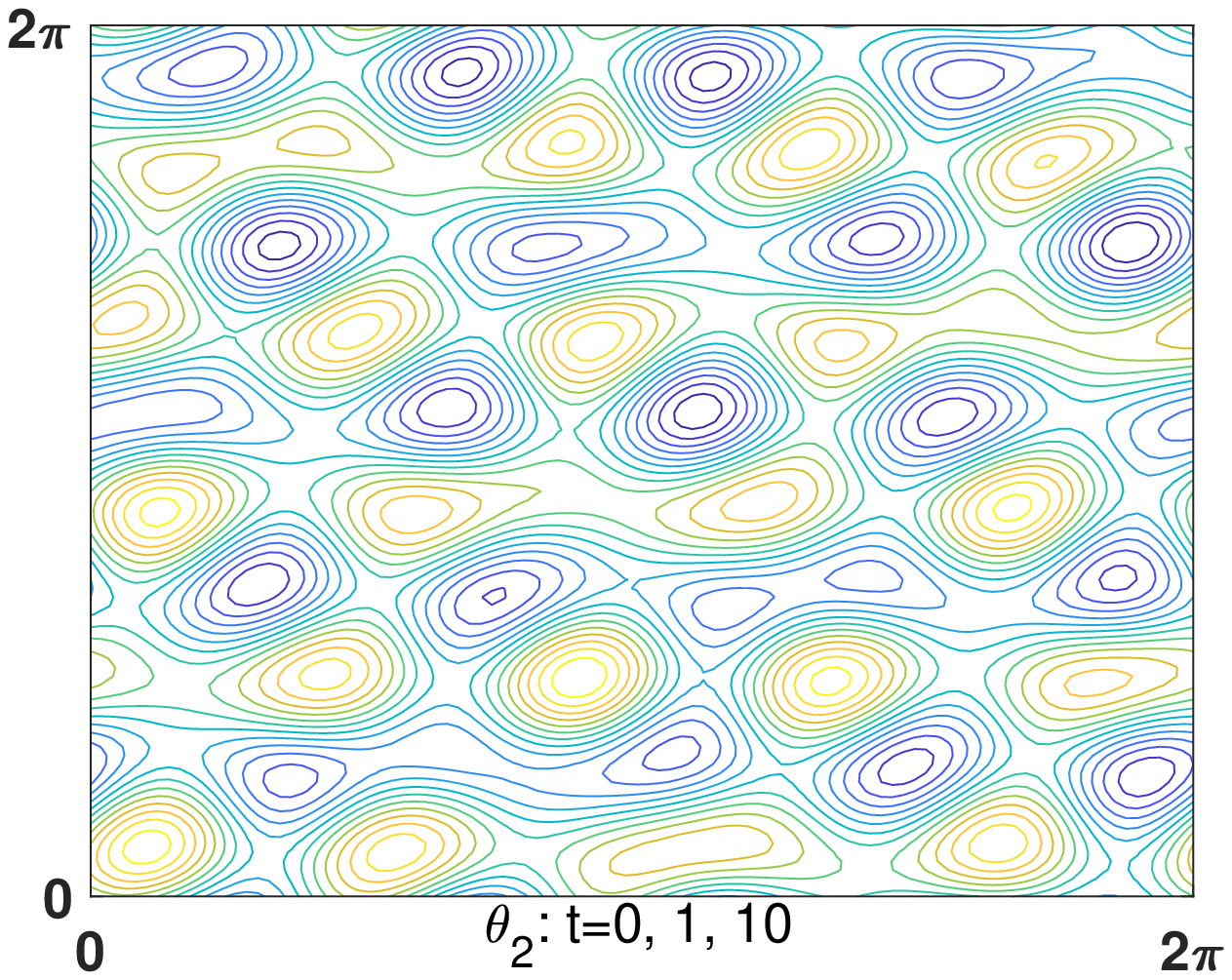}
 \includegraphics[height=.35\textwidth, width=.49\textwidth]{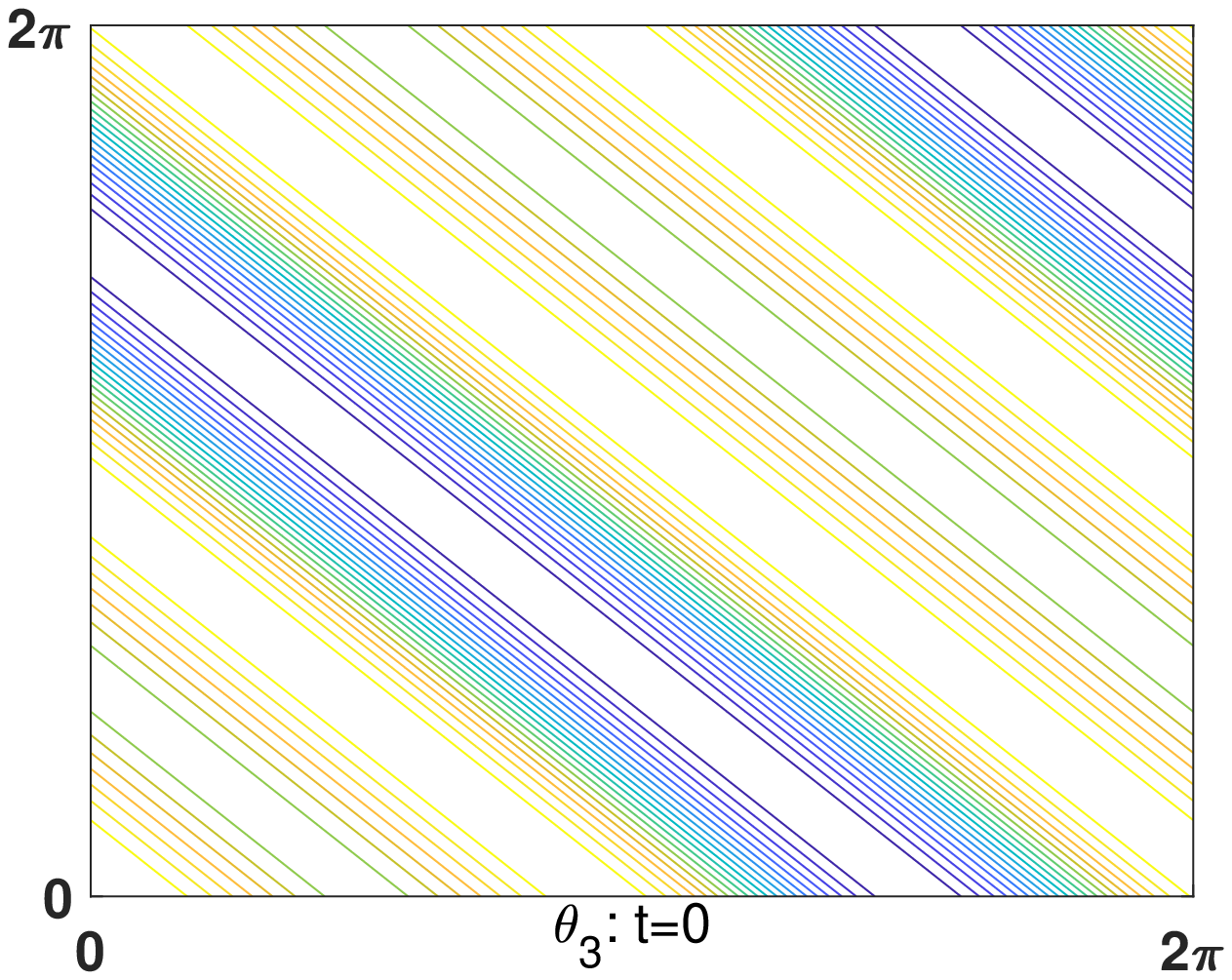}
  \includegraphics[height=.35\textwidth, width=.49\textwidth]{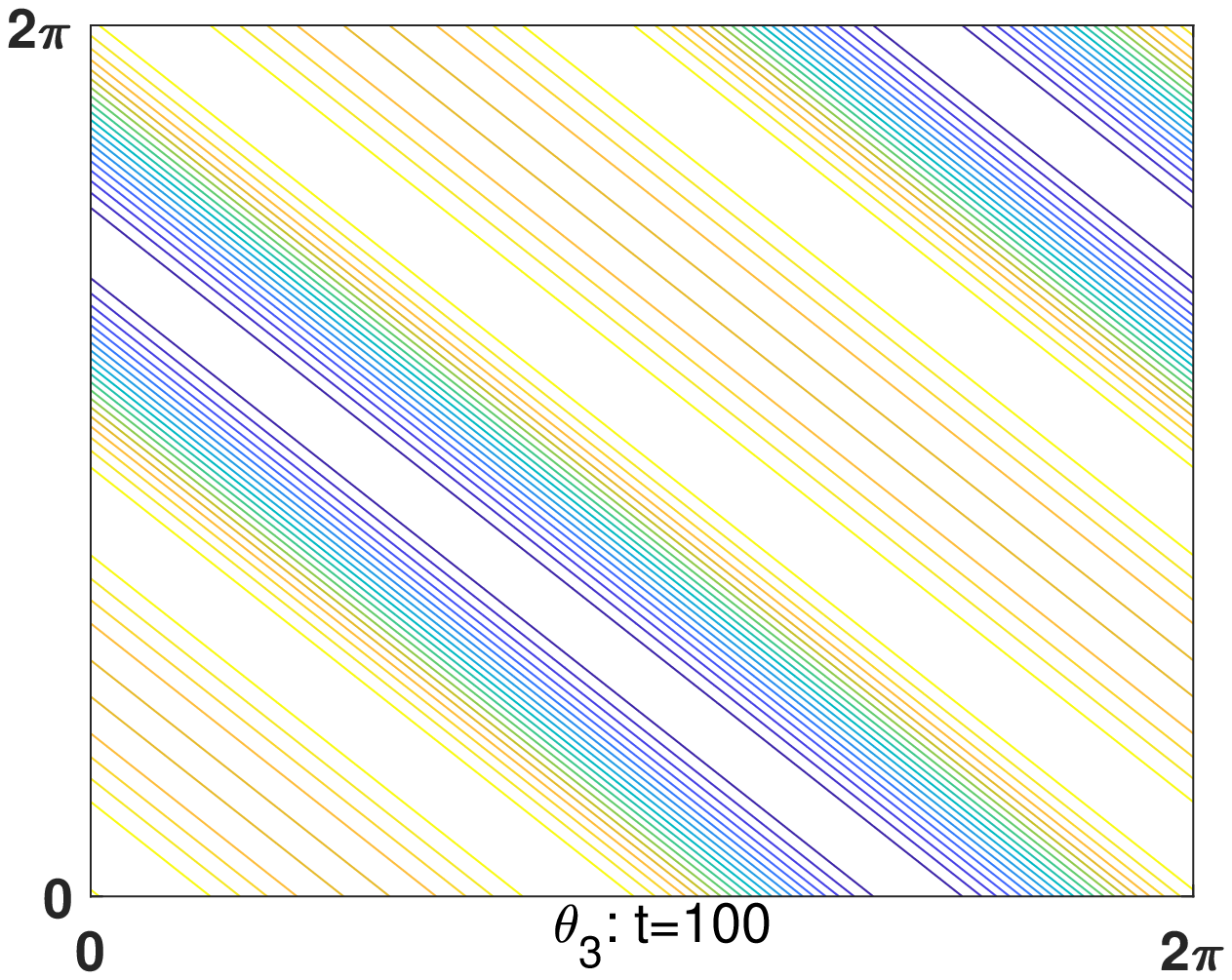}
\caption{Exact solutions $\theta_1$, $\theta_2$ and $\theta_3$ for $\alpha =0.001$ and $\kappa =0.001$.
}
 \label{f1}
 \end{figure}

 The solutions $\theta_1$ and $\theta_2$ as well as the solution (\ref{s1}) are quasi-stationary in the sense that their  flow patterns remain unchanged (see Figure \ref{f1}) as $t$ grows.  This is due to the fact that the flow patterns defined by $\theta(t)=$constants are the same with those defined by $\theta(0)=$constants. Therefore, the flow patterns are not sensitive with the change of the parameters $\alpha$ and $\kappa$. The solution $\theta$ in (\ref{s2}) is a unidirectional flow moving along the parallel straight lines
 \be nx+my= {\rm constants}.
 \ee
 Figure \ref{f1} also shows that the solution $\theta_3$  as well as the solution (\ref{s2}) evolves in a quasi-stationary manner, as the flow patterns of $\theta_3$ for $t>0$  remain parallel to their initial form.

It should be noted that the exact solutions are for any $\kappa>0$ and $\alpha\ge 0$. In the numerical computation  such as \cite{Con2}, it is difficult to keep the spectral scheme convergence when $\alpha$ is close to $0$.
The  vortex flow  $e^{-\kappa (n^2+m^2)^\alpha t}\sin nx \sin m y$ is developed  from  the Taylor flow \cite{T}.  The present study is developed from the author's recent investigation \cite{Chen21} on  metastability of Kolmogorov flow. 

\

\noindent \textbf{Acknowledgment}. This research was partially supported by NSFC of China (Grant No. 11571240).

\


\begin{thebibliography}{99}

\bibitem{Lee}
D. Chae, J. Lee,  Global well-posedness in the super-critical dissipative quasi-geostrophic
equations, Commun. Math. Phys. 233 (2003), 297-311.

\bibitem{Chene}Q. Chen, C. Miao, Z. Zhang,  A new Bernstein's inequality and the 2D dissipative quasigeostrophic
equation, Commun. Math. Phys. 271 (2007), 821-838.

\bibitem{Chen16} Z.M. Chen, Bifurcating steady-state solutions of the
dissipative quasi-geostrophic equation in Lagrangian formulation, Nonlinearity 29 (2016), 3132-3147.

\bibitem{Chen21} Z.M. Chen, Enhanced and unenhanced dampings of Kolmogorov flow, preprint. 

\bibitem{Chen08} Z.M. Chen, W.G. Price,
Stability and instability analyses of the dissipative quasi-geostrophic equation, Nonlinearity 21 (2008), 765-782.



\bibitem{Con2}P. Constantin, M.C. Lai, R. Sharma, Y.H.  Tseng, J. Wu,
New numerical results for the surface quasi-geostrophic equation, J. Sci. Comput. 50 (2012), 1-28.


\bibitem{Con} P. Constantin, J. Wu,  Behavior of solutions of 2D quasi-geostrophic equations, SIAM J.
Math. Anal. 30 (1999), 937-948.

\bibitem{Co} A. C\'ordoba, D. C\'ordoba, A maximum principle applied to quasi-geostrophic equations,
Commun. Math. Phys. 249 (2004), 511-528.

\bibitem{K} A. Kiselev, F.  Nazarov, A.  Volberg,  Global well-posedness for the critical 2d dissipative
quasi-geostrophic equation, Invent. Math. 167 (2007), 445-453.


\bibitem{T} G.I. Taylor, On the decay of vortices in a viscous fluid,  Philos. Mug. {46} (1923), 671-674.

\bibitem{Wu}J. Wu,  Global solutions of the 2d dissipative quasi-geostrophic equation in Besov spaces, SIAM
J. Math. Anal. 36 (2005), 1014-1030.


\end{thebibliography}
\end{document}